\title{On the $k$-limited packing numbers in graphs}
\author {
Babak Samadi\\
Department of Mathematics\\
Arak University\\
Arak, IRI\\
{\tt b-samadi@araku.ac.ir}\vspace{3mm}\\
}
\date{}
 \newtheorem{theorem}{Theorem}[section]
\newtheorem{lemma}[theorem]{Lemma}
\theoremstyle{definition}
\begin{document}

\maketitle 
\begin{abstract}
\noindent We give a sharp lower bound on the lower $k$-limited packing number of a general graph. Moreover, we establish a Nordhaus-Gaddum type bound on $2$-limited packing number of a graph $G$ of order $n$ as $L‎_{2}(G)+L‎_{2}(\bar G)‎‎\leq n+2‎$. Also, we investigate the concepts of packing number ($1$-limited packing number) and open packing number in graphs with more details. In this way, by making use of the classic result of Meir and Moon (1975) and its total version (2005) we prove the new upper bound $‎\gamma(T)‎\leq(n-\ell+2s)/3‎‎$ for every tree $T$ of order $n$ with $\ell$ leaves and $s$ support vertices and improve $‎\gamma‎_{t}(T)‎‎‎\leq (n+s)/2‎$, that was first proved by Chellali and Haynes in 2004. 
\end{abstract}
{\bf Keywords:} domination number, $k$-limited packing, lower $k$-limited packing, Nordhaus-Gaddum inequality, open packing number, packing number, total domination number\\\\
{\bf MSC 2000}: 05C69
\section{Introduction}

Throughout this paper, let $G$ be a finite connected graph with vertex set $V=V(G)$, edge set $E=E(G)$, minimum degree $\delta=\delta(G)$ and maximum degree $\Delta=\Delta(G)$. Recall that a {\em pendant vertex} of $G$ (a {\em leaf} of a tree $T$) is a vertex of degree $1$ and a support vertex is a vertex having at least one pendant vertex in its neighbothood. We use \cite{we} as a reference for terminology and notation which are not defined here. For any vertex $v\in V$, $N(v)=\{u\in G\mid uv\in E(G)\}$ denotes the {\em open neighborhood} of $v$ in $G$, and $N[v]=N(v)\cup \{v\}$ denotes its {\em closed neighborhood}. For a set $A$ of vertices, its open neighborhood is the set $N(A)=‎\cup‎_{v\in A}N(v)‎‎$. The complement $\bar G$ of a graph $G$ has vertex set $V$ and $uv‎\in E(\bar G)‎$ if and only if $uv‎\notin E(G)$. For any graph parameter $‎\psi‎$, bounds on $‎\psi(G)+‎\psi(\bar G)‎‎$ and $\psi(G)\psi(\bar G)‎‎$ are called Nordhaus-Gaddum inequalities. For more information about this subject the reader can consult \cite{ah}.\\
A set  $S\subseteq V$ is a {\em dominating set (total dominating set)} in $G$ if each vertex in $V \setminus S$ (in $V$) is adjacent to at least one vertex in $S$. The {\em domination number $\gamma(G)$ (total domination number $\gamma_{t}(G)$)} is the minimum cardinality of a dominating set (total dominating set) in $G$. A subset $B \subseteq V$ is a {\em packing (open packing)} in $G$ if for every distinct vertices $u,v‎\in B‎$, $N[u]‎‎\cap N[v]‎=‎\emptyset‎$ ($N(u)‎‎\cap N(v)=‎\emptyset‎‎‎$). The {\em packing number (open packing number)} $\rho(G)$ ($‎\rho‎‎‎^{o}‎(G)‎‎$) is the maximum cardinality of a packing (an open packing) in $G$. Also, the {\em lower packing number}, denoted $‎\rho‎_{L‎}‎‎(G)$, is the minimum cardinality of a maximal packing in $G$.\\
Clearly, $B \subseteq V$ is a packing (an open packing) in $G$ if and only if $|N[v]‎\cap B‎|‎\leq1‎$ ($|N(v)‎\cap B‎|‎\leq1‎$), for all $v‎\in V‎$. Here, we prefer to work with these definitions on these parameters rather than the previous ones.\\
Gallant et al. \cite{gghr} introduced the concept of limited packing in graphs and exhibited some real-world applications of it to network security, market saturation and codes (Also, the authors in \cite{hks} presented some results as an application of the concept of the limited packing). A subset $B \subseteq V$ is a {\em $k$-limited packing} in $G$ if $|N[u]‎\cap B‎|‎\leq k‎‎$, for every vertex $u\in V$. The {\em $k$-limited packing number} $L‎_{k}‎(G)$ is the maximum cardinality of a $k$-limited packing in $G$. Also, the {\em lower $k$-limited packing number}, denoted $L‎^{\ell}_{k‎}‎‎(G)‎‎‎‎‎$, is the minimum cardinality of a maximal $k$-limited packing in $G$. Obviously, $L‎^{\ell}_{k‎}‎‎(G)‎\leq‎‎‎‎‎‎ L‎_{k}‎(G)$. These two concepts generalize the concepts of packing and lower packing, respectively.\\
We prove a sharp lower bound on lower $k$-limited packing number of a general graph $G$ and drive the lower bound $n/2$ for lower $2$-limited packing number of a cubic graph of order $n$ that is stronger than its similar result in \cite{gghr}. Also, we show that $L‎_{2}(G)+L‎_{2}(\bar G)‎‎\leq n+2‎$ for each graph $G$ of order $n$.\\
Meir and Moon \cite{mm} proved that $‎\rho(T)=‎\gamma(T)‎‎$, for every tree $T$. We prove an upper bound on domination number of a graph and use this classic result to show that $(n-\ell+2s)/3$ is an upper bound on domination number of a tree of order $n$ with $\ell$ leaves and $s$ support vertices. Finally we improve $‎\gamma‎_{t}(T)‎\leq (n+s)/2‎‎‎$, that was first proved by Chellali and Haynes \cite{ch}, as an immediate result.


\section{Results on the $k$-limited packing number}

Gallant et al. \cite{gghr} exhibited the lower bound $L‎_{2}(G)‎‎\geq ‎n/4‎‎$, for a cubic graph $G$ of order $n$. In this section we establish a tight lower bound on lower $k$-limited packing number of a general graph, also we observe that it slightly improves the lower bound in \cite{gghr} as the special case $k=2$. First, we need the following useful lemma that is an immediate result of the definition of a maximal $k$-limited packing.
\begin{lemma}
Let $G$ be a graph and $k$ be a positive integer. Then a $k$-limited packing set in $G$ is maximal if and only if every vertex in $V‎\setminus B‎$ belongs to the closed neighborhood of a vertex $u$ with $|N[u]‎\cap B‎|=k$.
\end{lemma}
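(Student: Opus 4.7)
The plan is to prove both directions directly from the definition of maximality: $B$ is a maximal $k$-limited packing precisely when $B \cup \{v\}$ fails to be a $k$-limited packing for every $v \in V \setminus B$. Translating this failure condition into the closed-neighborhood language of the lemma will give the equivalence essentially for free.

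For the forward direction, suppose $B$ is maximal and fix any $v \in V \setminus B$. By maximality, $B \cup \{v\}$ is not a $k$-limited packing, so there exists some $u \in V$ with $|N[u] \cap (B \cup \{v\})| \geq k+1$. Since $B$ itself is a $k$-limited packing we have $|N[u] \cap B| \leq k$, and adding a single element $v$ can raise this count by at most one. Hence $v$ must actually contribute to the count (forcing $v \in N[u]$) and the count must have been exactly $k$ before (forcing $|N[u] \cap B| = k$). This is precisely the required witness $u$ for $v$.

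For the converse, assume that every $v \in V \setminus B$ lies in $N[u_v]$ for some vertex $u_v$ with $|N[u_v] \cap B| = k$. Then for such $v$, the set $B \cup \{v\}$ satisfies $|N[u_v] \cap (B \cup \{v\})| = k+1$, so $B \cup \{v\}$ is not a $k$-limited packing. Therefore no vertex of $V \setminus B$ can be added to $B$ while preserving the $k$-limited packing property, proving maximality.

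The proof is essentially a direct unpacking of the definitions, so there is no real obstacle; the only point that needs a moment of care is the observation that adding a single vertex $v$ to $B$ can increase $|N[u] \cap B|$ by at most one, which is what pins down both $v \in N[u]$ and $|N[u] \cap B| = k$ in the forward direction.
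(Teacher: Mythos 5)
Your proof is correct and is exactly the direct definitional argument the paper has in mind --- the paper in fact omits the proof entirely, calling the lemma ``an immediate result of the definition,'' and your write-up supplies precisely that unpacking (including the key observation that adding one vertex raises $|N[u]\cap B|$ by at most one, and the implicit use of the fact that the $k$-limited packing property is hereditary, so it suffices to test single-vertex extensions).
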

We now construct a $r$-regular graph $G‎_{k,r}‎=(V,E)$, for $k‎\leq r‎$, of order $n$. We make use of it to show that the lower bound in Theorem 2.2 is sharp. Consider the partition $V‎_{1}‎$, $V‎_{2}‎$ and $V‎_{3}‎$ of $V$ such that ‎$G‎_{k,r}‎[V‎_{1}]$ is a $(r-1)$-regular graph of which every vertex has one neighbor in $V‎_{2}‎$. Every vertex in $V‎_{2}‎$ has $r-k$ and $k$ neighbors in $V‎_{1}‎$ and $V‎_{3}‎$, respectively. Also, every vertex in $V‎_{3}‎$ has $r$ neighbors in $V‎_{2}‎$.\vspace{1.5mm}\\
We are now in a position to present the main theorem of this section.
\begin{theorem}
If $G$ is graph of order $n$ and $k$ is a positive integer, then
$$L‎^{\ell}_{k‎}‎‎(G)‎\geq ‎\dfrac{kn}{‎\Delta(‎\Delta-k+1‎)‎+k}‎$$
and this bound is sharp.
\end{theorem}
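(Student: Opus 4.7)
The plan is to take a minimum maximal $k$-limited packing $B$ (so $|B|=L^{\ell}_{k}(G)$) and set $S=\{u\in V:|N[u]\cap B|=k\}$. The preceding lemma gives $V\setminus B\subseteq\bigcup_{u\in S}N[u]$, so every vertex outside $B$ admits a witness in $S$. I will combine a covering estimate for $|V\setminus B|$ with a double-count of the incidence pairs between $S$ and $B$.

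For the covering estimate, $|N[u]|\leq\Delta+1$ and $|N[u]\cap B|=k$ for every $u\in S$ give $|N[u]\setminus B|\leq\Delta-k+1$, so $|V\setminus B|\leq\sum_{u\in S}|N[u]\setminus B|\leq(\Delta-k+1)|S|$. For the double-count, $k|S|=\sum_{u\in S}|N[u]\cap B|=\sum_{b\in B}|N[b]\cap S|$, and the crucial step is to bound the right-hand side by $\Delta|B|$, which would yield $|S|\leq\Delta|B|/k$. Combining the two then gives $|V\setminus B|\leq\Delta(\Delta-k+1)|B|/k$, so $kn=k|B|+k|V\setminus B|\leq|B|\bigl[k+\Delta(\Delta-k+1)\bigr]$, which rearranges to the stated bound.

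For sharpness, I would take $B=V_3$ in the $r$-regular graph $G_{k,r}$ constructed just before the theorem: by the construction each vertex of $V_2$ has exactly $k$ neighbors in $V_3$ (so $V_2\subseteq S$) and each vertex of $V_1$ has a unique neighbor in $V_2$, and the lemma confirms $B$ is a maximal $k$-limited packing. The counting identities $|V_1|=(r-k)|V_2|$ and $r|V_3|=k|V_2|$ then give $|V_3|=kn/(r(r-k+1)+k)$, meeting the bound with equality. The main obstacle is the incidence bound $\sum_{b\in B}|N[b]\cap S|\leq\Delta|B|$: the trivial estimate $|N[b]\cap S|\leq|N[b]|\leq\Delta+1$ overshoots by one precisely when $b\in B\cap S$, and to close the gap I would pass to open neighborhoods via $\sum_{b\in B}|N(b)\cap S|=k|S|-|B\cap S|$ and then absorb the slack $|B\cap S|$ by exploiting the structural fact that every vertex in $S\cap B$ has exactly $k-1$ neighbors in $B$.
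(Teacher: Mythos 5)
Your sharpness argument matches the paper's, and your covering step $|V\setminus B|\le\sum_{u\in S}|N[u]\setminus B|\le(\Delta-k+1)|S|$ is correct. The genuine gap is the incidence bound $\sum_{b\in B}|N[b]\cap S|\le\Delta|B|$, equivalently $k|S|\le\Delta|B|$: it is false in general. Take $k=1$ and $G=K_{1,\Delta}$ with $B$ consisting of the centre alone (a maximal packing); then every vertex $u$ has $|N[u]\cap B|=1$, so $S=V$ and $k|S|=\Delta+1>\Delta=\Delta|B|$ (already $K_2$ gives $2>1$). Your proposed repair is circular: the fact that each vertex of $S\cap B$ has exactly $k-1$ neighbours in $B$ is precisely what gives $\sum_{b\in B}|N(b)\cap S|=\sum_{u\in S}|N(u)\cap B|=k|S|-|S\cap B|$, and bounding the left-hand side by $\Delta|B|$ only returns $k|S|\le\Delta|B|+|S\cap B|\le(\Delta+1)|B|$. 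Feeding that into your covering estimate yields $L^{\ell}_{k}(G)\ge kn/\bigl((\Delta+1)(\Delta-k+1)+k\bigr)$, which is strictly weaker than the claimed bound; the slack $|S\cap B|$ cannot simply be absorbed.

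The paper closes exactly this gap by refusing to treat $S$ as one block. It splits $S$ into $B_k=S\cap B$ and $B'_k=S\setminus B$ and covers only $X=\cup_{i=0}^{k-1}B'_{i}$ rather than all of $V\setminus B$: a vertex of $B'_k$ has $k$ neighbours inside $B$ and hence at most $\Delta-k$ neighbours available in $X$, versus $\Delta-k+1$ for a vertex of $B_k$. The size of $B'_k$ is then controlled separately through the edge double count $\sum_{i}i|B'_{i}|\le\sum_{i}(\Delta-i+1)|B_{i}|$, and the three inequalities are combined with the multipliers $p$ and $q$. Some refinement of this kind is unavoidable, because the single-step bound $k|S|\le\Delta|B|$ on which your argument hinges is simply not true.
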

\begin{proof}
Since $L‎^{\ell}_{k‎}‎‎(G)‎=n‎$ for $k‎\geq ‎\Delta+1‎‎$, we may assume that $k‎\leq ‎\Delta‎‎$. Let $B$ be a maximal $k$-limited packing set in $G$ such that $|B|=L‎^{\ell}_{k‎}‎‎(G)‎$. For all $i‎\leq k‎\leq ‎\Delta‎$, we define
$$B‎_{i}‎=\{u‎\in B‎||N[u]‎\cap B‎|=i\}\ \ \& \ \ B'_{i}‎=\{u‎\in V‎\setminus B‎‎||N[u]‎\cap B‎|=i\}.$$
Obviously, $|B|=\sum_{i=1}^{k}|B‎_{i}‎|$ and $|V‎\setminus B‎|=\sum_{i=0}^{k}|B'_{i}‎|$.\\
By Lemma 2.1, every vertex in $B'_{0}$ has at least one neighbor in $B'_{k}$. On the other hand, every vertex in $B'_{k}$ has at most $‎\Delta‎-k$ neighbors in $B'_{0}$. It follows that
$$|B'_{0}|‎\leq|[B'_{0},B'_{k}]|‎‎\leq(‎\Delta‎-k)|B'_{k}|‎$$
and hence
\begin{equation}
|B'_{0}|-(‎\Delta‎-k)|B'_{k}|‎\leq0.‎
\end{equation}
Lemma 2.1 implies that every vertex in $‎X=\cup‎_{i=0}^{k-1}B'_{i}$ has at least one neighbor in $B‎_{k}$ or $B'_{k}$. On the other hand, every vertex in $B'_{k}$ and $B‎_{k}$ has at most $‎\Delta‎-k$ and $‎\Delta‎-k+1$ neighbors in $X$, respectively. This leads to
$$\sum_{i=0}^{k-1}|B'_{i}|‎\leq‎|[X,B'_{k}]|+|[X,B_{k}]|‎\leq(‎\Delta‎-k)|B'_{k}|‎+(‎\Delta‎-k+1)|B_{k}|$$
and therefore
\begin{equation}
\sum_{i=0}^{k-1}|B'_{i}|-(‎\Delta‎-k)|B'_{k}|‎‎\leq (‎\Delta‎-k+1)|B_{k}|.‎
\end{equation}
Furthermore, by double counting we obtain
\begin{equation}
\sum_{i=1}^{k}i|B'_{i}|=|[B,V‎\setminus B‎]|‎\leq‎\sum_{i=1}^{k}(‎\Delta‎-i+1)|B‎_{i}‎|.
\end{equation}
For convenience, we let $p$ and $q$ to be min$\{‎\frac{k}{‎\Delta‎-k+1},1‎\}$ and max$\{\frac{k}{‎\Delta‎-k+1},1\}-1$, respectively. Then $k-(p+q)(‎\Delta‎-k)=p+q$ and $q+i‎\geq q+1‎\geq p+q‎‎$, for $i‎\geq1‎$. Now we arrive at
\begin{equation}
(p+q)|V‎\setminus B‎|‎\leq (k-(p+q)(‎\Delta‎-k))‎|B'_{k}|+(p+q)|B'_{0}|+\sum_{i=1}^{k-1}(q+i)|B'_{i}|=‎\eta.‎
\end{equation}
Adding $p$ times (1), $q$ times (2) and (3) gives
\begin{equation}\label{EQ6}
\begin{array}{lcl}
‎\eta‎\leq q(‎\Delta‎-k+1)‎‎|B_{k}|&+&\sum_{i=1}^{k}(‎\Delta‎-i+1)|B‎_{i}‎|=(q+1)(‎\Delta‎-k+1)‎‎|B_{k}|\\
&+&\sum_{i=1}^{k-1}(‎\Delta‎-i+1)|B‎_{i}‎|‎=‎‎\xi‎‎‎.‎
\end{array}
\end{equation}
Since $(q+1)(‎\Delta‎-k+1)‎\leq ‎\Delta‎$, then $‎\xi‎\leq ‎\Delta‎|B|‎‎$. Now, by inequalities (4) and (5) we have
$$(p+q)|V‎\setminus B‎|‎\leq r|B|‎‎‎$$
and so $L‎^{\ell}_{k‎}‎‎(G)‎=|B|‎\geq \frac{kn}{‎\Delta‎(‎\Delta‎-k+1)+k}‎‎‎$.\vspace{1mm}\\
Now we show that the bound is sharp for all $k‎\leq ‎\Delta‎‎$. Consider the graph $G‎_{k,r}‎$. Then $|V‎_{1}‎|=|[V‎_{1},V‎_{2}‎‎]|=(r-k)|V‎_{2}‎|$ and $k|V‎_{2}‎|=|[V‎_{2},V‎_{3}‎‎]|=r|V‎_{3}‎|$. It is easy to see that there exists a positive integer $t$ such that $|V‎_{2}‎|=tr/g$ and $|V‎_{3}‎|=tk/g$, where $g$ = gcd$(k,r)$. Then $|V‎_{1}‎|=(r-k)tr/g$ and therefore $n=(r^2-kr+r+k)t/g$. According to Lemma 2.1, $V‎_{3}‎$ is a maximal $k$-limited packing in $G$. This leads to $L‎^{\ell}_{k‎}‎‎(G)‎‎\leq|V‎_{3}‎|‎=‎\frac{kn}{‎r(‎r-k+1‎)‎+k}$. This completes the proof.
\end{proof}
When we restrict the problem to the case $(k,r)=(2,3)$ then
$$L‎_{2}(G‎‎)‎‎\geq L‎^{\ell}_{2‎}‎‎(G)‎‎‎\geq n/4.$$
We note that Balister et al. \cite{bbg} proved the best-possible result that if $G$ is a cubic graph, then $L‎_{2}(G)‎\geq n/3‎‎$. This improves the lower bound given in \cite{gghr}. This shows that the lower bound $n/4$ for $2$-limited packing number of a cubic graph $G$ is not sharp, while it is sharp for lower $2$-limited packing number of $G$, by Theorem 2.2.\vspace{1mm}\\
In the next theorem we give a Nordhaus-Gaddum inequality for $2$-limited packing number of a graph just in terms of the order without any additional condition.
\begin{theorem}
Let $G$ be a graph of order $n$. Then
$$L‎_{2}(G)+L‎_{2}(\bar G)‎‎\leq n+2‎$$
and this bound is sharp.
\end{theorem}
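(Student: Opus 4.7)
The plan is to let $B$ and $B'$ be maximum $2$-limited packings in $G$ and $\bar G$ respectively, and to bound $|B \cap B'|$ by a small constant. Once we know $|B \cap B'|\le 2$, the inclusion-exclusion identity
$$|B|+|B'|=|B\cup B'|+|B\cap B'|\le n+2$$
immediately yields the claimed Nordhaus-Gaddum inequality.

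The core of the argument is therefore to show $|B\cap B'|\le 2$. I would assume for contradiction that there exist three distinct vertices $x,y,z\in B\cap B'$ and examine the triangle they induce in $K_n$. Since $x\in B$ and $B$ is a $2$-limited packing in $G$, the condition $|N_G[x]\cap B|\le 2$ forces $x$ to have at most one $G$-neighbor in $B$, hence at most one among $\{y,z\}$. The same holds for $y$ and for $z$, so $G[\{x,y,z\}]$ has maximum degree at most $1$ and thus contains at most one edge. By the identical argument applied to $B'$ in $\bar G$, the graph $\bar G[\{x,y,z\}]$ also contains at most one edge. But the edge sets of $G[\{x,y,z\}]$ and $\bar G[\{x,y,z\}]$ partition $E(K_3)$, which has three edges, a contradiction.

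For sharpness I would take $G=K_n$, so that $\bar G=\overline{K_n}$. Every pair of vertices in $K_n$ forms a $2$-limited packing while no triple does, giving $L_2(K_n)=2$; every vertex of $\overline{K_n}$ has $N[v]=\{v\}$, so the whole vertex set is a $2$-limited packing, giving $L_2(\overline{K_n})=n$. Thus $L_2(G)+L_2(\bar G)=n+2$.

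The main obstacle is really just spotting the three-vertex pigeonhole: once one notices that a common triple in $B\cap B'$ forces both induced subgraphs on that triple to have at most one edge, the rest is routine. No further structural analysis of $G$ is required.
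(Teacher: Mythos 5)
Your proof is correct, and it takes a genuinely different route from the paper. The paper argues by induction on $n$: it picks a vertex $u$ of maximum degree, finds a vertex $v\in N[u]$ outside a maximum $2$-limited packing $B$ of $G$, deletes $v$, and combines $|B|\le L_2(G-v)$ and $|\bar B|-1\le L_2(\bar G -v)$ with the inductive hypothesis; this requires separate handling of the base cases $G=K_n$ and $\Delta(G)=\Delta(\bar G)=1$. You instead give a direct, non-inductive argument: by inclusion--exclusion it suffices to bound $|B\cap B'|$ by $2$, and a triple $x,y,z\in B\cap B'$ is impossible because the $2$-limited packing condition (each of $x,y,z$ lies in its own closed neighborhood, so has at most one neighbor in the packing) forces both $G[\{x,y,z\}]$ and $\bar G[\{x,y,z\}]$ to have at most one edge, while together they must cover the three edges of $K_3$. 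Your sharpness example ($K_n$) coincides with the paper's. Your approach is shorter, avoids the case analysis and base cases of the induction, and makes the source of the additive constant $2$ transparent (it is exactly the maximum possible overlap of the two packings); it also suggests a generalization to $L_k(G)+L_k(\bar G)$ by asking how large a clique can be covered by two graphs of maximum degree $k-1$. The paper's induction, on the other hand, follows the more standard vertex-deletion template for Nordhaus--Gaddum bounds.
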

\begin{proof}
The result is obvious for $n=1$ and if $G=K‎_{n}‎$, for $n‎\geq2‎$, then $L‎_{2}(G)+L‎_{2}(\bar G)‎‎=n+2‎$. Therefore, we may assume that $‎\Delta(G),‎\Delta(\bar G)‎\geq1‎‎$. If $‎\Delta(G)=\Delta(\bar G)‎=1$ then $n‎\leq3‎$ that implies the inequality. Hence, without loss of generality we assume that $‎\Delta(G)‎\geq2‎$.\\
Now we proceed by induction on the order $n$. We suppose it is true for $n-1$ and prove it for $n$. Let $B$ and $\bar B$ be maximum $2$-limited packings in $G$ and $\bar G$, respectively. Let $u$ be a vertex in $G$ with $deg(u)=‎\Delta(G)‎$. Then there exists a vertex $v‎\in N[u]‎$ not belonging to $B$, for otherwise $|N[u]‎\cap B‎|=‎\Delta(G)+1‎\geq3‎‎$, a contradiction. Applying the inductive hypothesis to $G-v$, we obtain
\begin{equation}
L‎_{2}(G-v)+L‎_{2}(\bar G-v)‎‎‎\leq ‎n+1.
\end{equation}
On the other hand, $B$ is a $2$-limited packing in $G-v$ and hence $|B|‎\leq L‎_{2}(G-v)‎$. Moreover, it follows by the definition that $|\bar B|-1‎\leq L‎_{2}(\bar G-v)‎$. By (6), we have 
$$|B|+|\bar B|-1‎\leq n+1.‎$$
This completes the proof.
\end{proof}


\section{The special case $k=1$}

In this section we consider the case $k=1$ in a more specific way. As a special case of Theorem 2.2 we have
$$L‎^{\ell}_{1‎}‎‎(G)=‎\rho‎_{L}(G)‎‎‎\geq n/(‎\Delta‎‎^{2}+1‎‎)‎$$
that was first presented by Henning in \cite{h} (also, see \cite{bbg} and \cite{gz}). Moreover, Gagarin and Zverovich \cite{gz} presented the lower bound
$$‎\rho(G)=L‎_{1}(G)‎‎‎\geq(n+‎\Delta(‎\Delta-‎\delta‎‎)‎)/(‎\Delta‎^{2}+1‎‎)‎$$
for a graph $G$ of order $n$. In the following we shall present a lower bound on the packing number of a graph that is tighter for graphs with support vertices. We need the following useful lemma.
\begin{lemma}
Let $G$ be a graph of order $n$ with $s$ support vertices. Let $\ell‎_{1},...,\ell‎_{s}$ be the pendant vertices adjacent to the support vertices $v‎_{1},...,v‎_{s}‎$, respectively.  Then the following statements hold. \vspace{2.5mm}\\
$(i)$\ \ There exists a maximum packing set $B$ in $G$ containing $\ell‎_{1},...,\ell‎_{s}$. \vspace{1mm}\\
$(ii)$\ \ There exists a maximum open packing set $B$ in $G$ containing $\ell‎_{1},...,\ell‎_{s}$.

\end{lemma}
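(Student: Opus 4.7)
The plan is to prove both parts by a swapping argument: start with a maximum (open) packing and, for each support vertex $v_i$ in turn, massage $B$ so that $\ell_i \in B$ without either changing $|B|$ or ejecting any previously inserted $\ell_j$. The single structural fact doing all the work is that $\ell_i$ is a pendant, so the \emph{only} vertex $w$ with $\ell_i\in N[w]$ other than $\ell_i$ itself is $v_i$; hence adding $\ell_i$ to $B$ can only affect the constraint at $w\in\{\ell_i,v_i\}$.

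For part (i), fix a maximum packing $B$ and suppose $\ell_i\notin B$. The packing property gives $|N[v_i]\cap B|\le 1$. I would split into cases. If $v_i\in B$, set $B':=(B\setminus\{v_i\})\cup\{\ell_i\}$; for every vertex $w\in N[v_i]$ the unique element of $N[w]\cap B$ was $v_i$ (pendant $\ell_i$ has no other neighbour), and replacing it by $\ell_i$ leaves $|N[w]\cap B'|\le 1$, while for $w\notin N[v_i]\cup\{\ell_i\}$ the closed neighbourhood misses $\ell_i$ so nothing changes. If $v_i\notin B$ and $N[v_i]\cap B=\{u\}$ with $u\ne v_i,\ell_i$, take $B':=(B\setminus\{u\})\cup\{\ell_i\}$; the same analysis applies, with the slot at $v_i$ now occupied by $\ell_i$ instead of $u$. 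The remaining case $N[v_i]\cap B=\emptyset$ cannot occur: adding $\ell_i$ to $B$ would then produce a strictly larger packing, contradicting maximality of $|B|$.

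For part (ii), fix a maximum open packing $B$ with $\ell_i\notin B$. Since $N(\ell_i)=\{v_i\}$, the only vertex $w$ whose \emph{open} neighbourhood contains $\ell_i$ is $w=v_i$; the open packing condition gives $|N(v_i)\cap B|\le 1$. If this set is empty, then $B\cup\{\ell_i\}$ is still an open packing (the constraints at $w\ne v_i$ are untouched, and the one at $v_i$ jumps from $0$ to $1$), contradicting maximality. Otherwise $N(v_i)\cap B=\{u\}$ for some $u\ne\ell_i$; set $B':=(B\setminus\{u\})\cup\{\ell_i\}$. For $w\ne v_i$ the set $N(w)\cap B'$ is contained in $N(w)\cap B$, and $N(v_i)\cap B'=\{\ell_i\}$, so $B'$ is an open packing of the same cardinality.

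The only subtlety is to check that iterating the swap over $i=1,\dots,s$ does not undo earlier progress, i.e.\ that the vertex $u$ removed at step $i$ is never a previously inserted $\ell_j$. But $u$ is a neighbour of $v_i$, and $\ell_j$ is a pendant attached only to $v_j$; so $u=\ell_j$ would force $v_i=v_j$, hence $i=j$. This invariance closes the induction, and both statements follow. The argument is essentially mechanical; the only place one must be attentive is verifying that the closed-neighbourhood (resp.\ open-neighbourhood) constraints remain intact at the two ``sensitive'' vertices $\ell_i$ and $v_i$ after each swap, which is exactly where the pendant hypothesis on $\ell_i$ is used.
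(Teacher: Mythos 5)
Your proof is correct and follows essentially the same swapping argument as the paper: in each case either exchange the unique vertex of $N[v_i]\cap B$ (resp.\ $N(v_i)\cap B$) for $\ell_i$, or derive a contradiction with maximality when that set is empty. You are in fact slightly more thorough than the paper, which only treats part $(i)$ explicitly and does not verify that the iteration over $i$ never ejects a previously inserted leaf.
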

\begin{proof}
We only prove $(i)$, as $(ii)$ can be proved in a similar fashion. Let $B$ be a maximum packing in $G$. Suppose that the vertices $l‎_{1}‎$ and $v‎_{1}‎$ do not belong to $B$. Since $B$ is a maximum packing in $G$ then the support vertex $v‎_{1}‎$ has a neighbor $u‎_{1}‎$ in $B$, otherwise $B‎\cup \{\ell‎_{1}‎\}‎$ is a packing in $G$ that contradicts the maximality of $B$. Now $B‎_{1}‎=(B‎\setminus \{u‎_{1}‎\})‎\cup \{\ell‎_{1}‎\}‎‎$ is a packing in $G$ and $|B|=|B‎_{1}‎|$. Now let (just) one of $\ell‎_{1}‎$ and $v‎_{1}‎$ belongs to $B$. Without loss of generality we can assume that $\ell‎_{1}\in B$, otherwise we replace $v‎_{1}‎$ with $\ell‎_{1}‎$ in $B$ and the new set would be a packing in $G$ with the same cardinality as $B$, as well. Repeating this precess for all pairs $v‎_{i}$‎ and $\ell‎_{i}‎$, for all $i‎\geq1‎$, we can construct a maximum packing set $B$ containing the pendant vertices $\ell‎_{1},...,\ell‎_{s}$. 
\end{proof}
Now we give the lower bound in terms of order, maximum degree and the number of support vertices. In fact, the bound is sharp for some graphs with no support vertices. For example, the lower bound is achieved by the Petersen graph which has 10 vertices and diameter 2. Also, by taking graphs consisting of many vertex-disjoint copies of the Petersen graph, this shows that for every $n$ divisible by 10, there is a graph on $n$ vertices with $‎\rho‎‎‎(G)=n/10=n/(‎\Delta‎^{2}‎‎+1)$. But the lower bound works better if we consider all graphs with support vertices, especially trees. 
\begin{theorem}
Let $G$ be a graph of order $n$ with $s$ support vertices and maximum degree $‎\Delta‎$. Then
$$‎\rho‎‎‎(G)‎\geq‎\dfrac{n+s(‎\Delta‎^{2}-‎\Delta‎‎‎)}{1+\Delta‎^2‎}‎‎‎‎$$
and this bound is sharp.
\end{theorem}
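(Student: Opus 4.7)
The plan is to imitate the classical argument that gives $\rho(G)\geq n/(\Delta^{2}+1)$, but to start from a carefully chosen maximum packing that exploits the small local degree at the leaves.

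First I would invoke Lemma 3.1$(i)$ to fix a maximum packing $B$ in $G$ that contains the $s$ pendant vertices $\ell_{1},\dots,\ell_{s}$ (one for each support vertex). Since $B$ is maximum, it is in particular a \emph{maximal} packing, so for every $w\in V\setminus B$ there is some $u\in B$ with $N[u]\cap N[w]\neq\emptyset$, i.e.\ $d(u,w)\leq 2$. Writing $N^{2}[u]=\{v\in V:d(u,v)\leq 2\}$, this says
$$V=\bigcup_{u\in B}N^{2}[u].$$

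Next I would bound $|N^{2}[u]|$ vertex-by-vertex. For an arbitrary vertex $u$, the standard estimate
$$|N^{2}[u]|\leq 1+\deg(u)+\deg(u)(\Delta-1)\leq 1+\Delta^{2}$$
holds. For a leaf $\ell_{i}\in B$, however, $\deg(\ell_{i})=1$, so the same estimate collapses to $|N^{2}[\ell_{i}]|\leq 1+1+(\Delta-1)=\Delta+1$. Splitting the union according to whether $u\in B$ is one of the chosen leaves or not, I obtain
$$n\;\leq\;\sum_{u\in B}|N^{2}[u]|\;\leq\;s(\Delta+1)+(|B|-s)(\Delta^{2}+1).$$
Rearranging yields
$$n+s(\Delta^{2}-\Delta)\;\leq\;|B|\,(\Delta^{2}+1),$$
which is precisely the stated bound since $|B|=\rho(G)$.

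For sharpness, I would point to the Petersen graph example already mentioned before the theorem: it has $n=10$, $\Delta=3$, $s=0$ and $\rho=1=n/(\Delta^{2}+1)$, so the inequality holds with equality. Disjoint unions of copies give equality on infinitely many orders. The only delicate step in the argument is the maximality observation (that every vertex lies in some $N^{2}[u]$, $u\in B$), but this is immediate from the fact that a maximum packing is maximal together with the definition of a packing; the rest is the straightforward degree count above.
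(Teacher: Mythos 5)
Your argument is correct, but it follows a genuinely different route from the paper's. You prove the covering identity $V=\bigcup_{u\in B}N^{2}[u]$ from maximality of the maximum packing $B$ and then sum the ball sizes $|N^{2}[u]|\leq 1+\deg(u)\Delta$, exploiting $\deg(\ell_i)=1$ at the $s$ leaves supplied by Lemma 3.1$(i)$; the algebra $n\leq s(\Delta+1)+(|B|-s)(\Delta^{2}+1)$ then rearranges exactly to the claimed bound. The paper instead runs an edge-counting argument: it splits $V\setminus B$ into $A_{0}$ (vertices with no neighbour in $B$) and $A_{1}$ (vertices with exactly one), observes $|[B,V\setminus B]|=a_{1}\leq\Delta(|B|-s)+s$, and handles $A_{0}\neq\emptyset$ by a separate case in which maximality forces $a_{0}\leq(\Delta-1)a_{1}$ via an auxiliary set $Q$. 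The two computations are essentially dual, but yours is the classical sphere-covering proof of $\rho(G)\geq n/(\Delta^{2}+1)$ refined at the leaves, and it avoids the case analysis entirely; the paper's version has the minor advantage that its Case 1 yields the intermediate, slightly stronger bound $(n+s(\Delta-1))/(1+\Delta)$ when $A_{0}=\emptyset$. One small point on sharpness: your Petersen-graph example has $s=0$, so it only certifies tightness of the underlying bound $n/(\Delta^{2}+1)$; the paper uses the star $K_{1,n-1}$ (with $s=1$), which also witnesses that the correction term $s(\Delta^{2}-\Delta)$ cannot be improved. It would strengthen your write-up to include an example with $s\geq 1$ achieving equality.
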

\begin{proof}
Let $\{v‎_{1},...,v‎_{s}‎‎\}$ be the set of support vertices and $\{\ell‎_{1},...,\ell‎_{s}\}‎‎$ be a subset of pendant vertices such that $l‎_{i}v‎_{i}‎\in E(G)‎‎‎$, for all $1‎\leq i‎\leq s‎‎$. Let $B$ be a maximum packing in $G$ with $‎\rho‎‎(G)=|B|$ containing $\ell‎_{1},...,\ell‎_{s}$, by Part $(i)$ of Lemma 3.1. \\
Let $A‎_{0}=\{v‎\in V‎\setminus B|N(v)‎\cap B=‎\emptyset‎‎‎‎\}‎$ and $A‎_{1}=(V‎\setminus B)‎\setminus A‎_{0}‎‎$. Also, $a‎_{0}=|A‎_{0}‎|‎$ and $a‎_{1}‎=|A‎_{1}‎|$. Since $B$ is a packing in $G$, then $V‎\setminus B‎$ is a disjoint union of the sets $A‎_{0}‎$ and $A‎_{1}‎$. Now we get
\begin{equation}
n=|B|+|V‎\setminus B‎|=|B|+a‎_{0}+a‎_{1}.
\end{equation}
On the other hand, if $[B,V‎\setminus B‎]$ is the set of edges having one end point in $B$ and the other in $V‎\setminus B‎$, then
\begin{equation}
|[B,V‎\setminus B‎]|=a‎_{1}‎\leq ‎\Delta(|B|-s)‎‎+s.
\end{equation}
{\bf Case 1.} Let $A‎_{0}=‎‎\emptyset‎‎‎‎$. By inequalities (7) and (8), we have
\begin{equation*}
n=|B|+a‎_{1}‎\leq |B|+‎\Delta(|B|-s)‎‎+s.‎
\end{equation*}
Therefore,
$$‎\rho‎_{L‎}‎‎(G)=|B|‎\geq ‎\frac{n+s(‎\Delta-1‎)}{1+‎\Delta‎}‎\geq \dfrac{n+s(‎\Delta‎^{2}-‎\Delta‎‎‎)}{1+\Delta‎^2‎}$$
as desired.\\
{\bf Case 2.} Let $A‎_{0}‎\neq‎‎‎\emptyset‎‎‎‎$. Let $v‎\in A‎_{0}‎$. There exists a vertex $u$ in $A‎_{1}‎$ such that $u‎\in N(v)‎$, otherwise $B‎\cup \{v\}‎$ is a packing in $G$ that contradicts the maximality of $B$. This shows that the set $Q=\{v‎\in N(A‎_{0}‎)‎||N(v)‎\cap B‎|=1\}$ is nonempty and $A‎_{0}‎‎\subseteq N(Q)‎$. Every vertex in $Q$ has at most $‎‎\Delta-1‎‎$ neighbors in $A‎_{0}‎$ and therefore $Q$ has at most $(‎‎\Delta-1)|Q|$ neighbors in $A‎_{0}‎$. Now we deduce that
$$a‎_{0}=|A‎_{0}‎|=|N(Q)‎‎\cap A‎_{0}‎‎|‎\leq(‎‎\Delta-1)|Q|‎\leq (‎‎\Delta-1)a‎_{1}.‎‎$$
By (7) and (8), we have
$$n‎\leq|B|+‎(‎‎\Delta-1)a‎_{1}+a‎_{1}=|B|+‎\Delta a‎_{1}‎\leq|B|+\Delta(‎\Delta(|B|-s)‎‎+s)‎‎.$$
Therefore $‎\rho‎_{L‎}‎‎(G)=|B|‎\geq (n+s(‎\Delta^2-‎\Delta‎‎))/(1+‎\Delta^2‎)$, as desired.\\
To show that the lower bound is sharp, we consider the star $K‎_{1,n-1}‎$ for $n‎\geq2‎$. This completes the proof.
\end{proof}
\section{Upper bounds}
We can use the packing $B$ described in Part $(i)$ of Lemma 3.1 to obtain a tight upper bound on domination number of a graph as follows:
\begin{theorem}
For any graph $G$ of order $n‎\geq3‎$, with $\ell$ pendant vertices and $s$ support vertices,
$$‎\rho(G)‎‎\leq ‎\frac{n-\ell+‎\delta's}{1+‎\delta'‎}‎‎$$
where $‎\delta'‎$ is the minimum degree taken over all vertices that are not pendant vertices.
\end{theorem}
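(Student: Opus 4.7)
The plan is to invoke Lemma 3.1$(i)$ to obtain a well-structured maximum packing $B$ and then estimate the number of edges $|[B,V\setminus B]|$ in two complementary ways.

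First, for each support vertex $v_i$ fix a pendant $\ell_i$ adjacent to it, and by Lemma 3.1$(i)$ choose a maximum packing $B$ with $\{\ell_1,\dots,\ell_s\}\subseteq B$. Since any two distinct pendants sharing a common support vertex already have intersecting closed neighborhoods, $B$ cannot contain more than one pendant per support vertex; thus $B$ contains exactly $s$ pendants, and the remaining $|B|-s$ vertices of $B$ are non-pendants of degree at least $\delta'$. Counting edges from the $B$-side gives the lower bound
$$|[B,V\setminus B]|\geq s\cdot 1+(|B|-s)\delta'=s+\delta'(|B|-s).$$

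Next I would bound $|[B,V\setminus B]|$ from above using the $V\setminus B$-side. The packing property forces $|N[v]\cap B|\leq 1$ for every $v\in V\setminus B$, so each such vertex contributes at most one edge. The key refinement is that the $\ell-s$ pendants lying outside $B$ actually contribute \emph{zero} edges: the unique neighbor of such a pendant is a support vertex $v_i$, and no support vertex can lie in $B$, for otherwise the pair $v_i,\ell_i\in B$ would violate the packing condition since $\{v_i,\ell_i\}\subseteq N[v_i]\cap N[\ell_i]$. Therefore
$$|[B,V\setminus B]|\leq (n-|B|)-(\ell-s).$$
Combining the two estimates yields $s+\delta'(|B|-s)\leq n-|B|-\ell+s$, which rearranges to $(1+\delta')|B|\leq n-\ell+\delta's$, giving the desired bound on $\rho(G)=|B|$.

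The main subtlety is the observation that pendants outside $B$ contribute nothing to $|[B,V\setminus B]|$; without it one only recovers $|[B,V\setminus B]|\leq n-|B|$, which is too weak. Everything else amounts to a routine double-counting argument, with the hypothesis $n\geq 3$ ensuring that $\delta'$ is well-defined (some non-pendant vertex exists whenever the bound is non-vacuous).
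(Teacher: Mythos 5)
Your proposal is correct and follows essentially the same argument as the paper: choose a maximum packing containing one pendant per support vertex via Lemma 3.1$(i)$, note that support vertices and the remaining $\ell-s$ pendants lie outside $B$ and that the latter contribute no edges to $[B,V\setminus B]$, and double-count that edge set to get $\delta'(|B|-s)+s\leq |[B,V\setminus B]|\leq n-|B|-(\ell-s)$. The key refinement you single out (the $-(\ell-s)$ term in the upper estimate) is exactly the step the paper uses as well.
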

\begin{proof}
Let $B$ be a maximum packing in $G$. According to the first part of Lemma 3.1, we may assume that $\{\ell‎_{1},...,\ell‎_{s}\}‎\subseteq B‎‎‎$. Since $B$ is a packing in $G$, then all support vertices belong to $V‎\setminus B‎$ and pendant vertices different from $\ell‎_{1},...,\ell‎_{s}$ are in $V‎\setminus B‎$, as well. Clearly, all pendant vertices in $V‎\setminus B‎$ have no neighbors in $B$. Therefore
\begin{equation}
|[B,V‎\setminus B‎]|‎\leq n-|B|‎-(\ell-s).
\end{equation}
Since $B$ is an independent set then every vertex in $B‎\setminus ‎\{\ell‎_{1},...,\ell‎_{s}\}$ has at least $‎\delta'‎$ neighbors in $V‎\setminus B‎$. Hence
\begin{equation}
|[B,V‎\setminus B‎]|‎‎\geq ‎\delta'‎(|B|-s)+s‎.
\end{equation}
Together inequalities (9) and (10) imply the desired upper bound. 
\end{proof}
Now, we bound the domination number of a tree from above by considering the number of its leaves and support vertices. We will need the following useful lemma due to Meir and Moon \cite{mm}.
\begin{lemma}\cite{mm}
If $T$ is any tree, then $‎\gamma(T)=‎\rho(T)‎‎$.
\end{lemma}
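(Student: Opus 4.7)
The inequality $\rho(G)\leq\gamma(G)$ holds for every graph $G$: given a packing $B$ and a dominating set $D$, each $v\in B$ selects some $d_v\in D\cap N[v]$, and pairwise disjointness of the closed neighborhoods of vertices in $B$ makes $v\mapsto d_v$ injective, so $|B|\leq|D|$. The substance of the lemma is the reverse inequality $\gamma(T)\leq\rho(T)$ for trees, which I would prove by constructing compatible dominating and packing sets side by side through induction on $n=|V(T)|$.

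After dispatching the trivial cases ($n\leq 2$, or $T$ a star), root $T$ at an arbitrary vertex, pick a leaf $v$ of maximum depth, and let $u$ and $w$ denote its parent and grandparent respectively. Depth-maximality forces every child of $u$ to be a leaf, so $T^{*}:=T-(\{u\}\cup\{\text{children of }u\})$ is obtained from $T$ by pruning a pendant subtree and is itself a tree. Adjoining $u$ to a minimum dominating set of $T^{*}$ dominates $T$, so $\gamma(T)\leq\gamma(T^{*})+1$; the inductive hypothesis gives $\gamma(T^{*})\leq\rho(T^{*})$, and it therefore suffices to exhibit a packing of $T$ of cardinality $\rho(T^{*})+1$.

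For this, let $P^{*}$ be a maximum packing of $T^{*}$. Since $N_T[v]=\{u,v\}$ and $w$ is the only vertex of $T^{*}$ adjacent to $u$ in $T$, the set $P^{*}\cup\{v\}$ is automatically a packing of $T$ whenever $w\notin P^{*}$. The main obstacle is the exceptional case in which every maximum packing of $T^{*}$ contains $w$, which I expect to defeat by a swap argument in the spirit of Lemma 3.1: when $w$ has a leaf-neighbor $\ell$ in $T^{*}$, the set $(P^{*}\setminus\{w\})\cup\{\ell\}$ is still a maximum packing of $T^{*}$ and avoids $w$; when $w$ is itself a leaf of $T^{*}$, one re-chooses the depth-maximal leaf so that the new grandparent has a leaf-neighbor and reduces to the previous subcase; and when $w$ lies internally in $T^{*}$ with no leaf-neighbor there, the structural restriction that the other children of $w$ are leaves or support vertices with only leaf children (forced by the choice of a longest path through $v$) provides enough flexibility to redefine the pruned subtree. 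In every subcase one obtains a packing of $T$ of size $\rho(T^{*})+1$, closing the chain $\gamma(T)\leq\gamma(T^{*})+1\leq\rho(T^{*})+1\leq\rho(T)$; the hardest point of the argument is precisely this case analysis around the grandparent $w$.
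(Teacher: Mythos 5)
The paper does not prove this lemma: it is imported verbatim from Meir and Moon \cite{mm}, so there is no in-paper argument to compare yours against and your attempt must stand on its own. Your easy half, $\rho(G)\le\gamma(G)$ for every graph, is correct. The inductive half, however, has a genuine gap: the inequality your whole chain rests on, $\rho(T)\ge\rho(T^{*})+1$, is false for trees that land in your second subcase, and your proposed remedy there does not apply. Concretely, let $T$ have vertices $z,a_1,b_1,a_2,b_2,w,u,v$ and edges $za_1$, $a_1b_1$, $za_2$, $a_2b_2$, $zw$, $wu$, $uv$. Rooting at $z$ (or at either end of a longest path), the unique deepest leaf is $v$, with parent $u$ and grandparent $w$, so the pruning is forced: $T^{*}=T-\{u,v\}$. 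In $T^{*}$ the vertex $w$ is a leaf, and since a packing of size $3$ in the $6$-vertex tree $T^{*}$ must consist of leaves whose closed neighborhoods partition $V(T^{*})$, the set $\{w,b_1,b_2\}$ is the \emph{unique} maximum packing; thus $\rho(T^{*})=3$ and every maximum packing of $T^{*}$ contains $w$. Yet $\rho(T)=3$ as well: a packing of size $4$ in the $8$-vertex tree $T$ would force four leaves with closed neighborhoods partitioning $V(T)$, and $T$ has only three leaves. So no packing of $T$ of size $\rho(T^{*})+1$ exists, and the step $\rho(T^{*})+1\le\rho(T)$ fails. Your fix for this subcase --- re-choosing the depth-maximal leaf so that the new grandparent has a leaf-neighbor --- is unavailable: all deepest leaves here share the grandparent $w$, and even re-choosing the longest path to end at $b_1$ yields grandparent $z$, which has no leaf-neighbor in the corresponding pruned tree either.

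The lemma is of course true in this example ($\gamma(T)=\rho(T)=3$), but the slack sits in your other inequality: here $\gamma(T)=\gamma(T^{*})$, not $\gamma(T^{*})+1$. To close the induction in the configuration where $w$ is a leaf of $T^{*}$ and every maximum packing of $T^{*}$ contains $w$, you must change the \emph{reduction} rather than the packing, e.g.\ prune the larger branch $T':=T-(\{w,u\}\cup\{\text{children of }u\})$: then $u$ dominates $w$, $u$ and its children, giving $\gamma(T)\le\gamma(T')+1$, while $N_T[v]=\{u,v\}$ is disjoint from $V(T')$, giving $\rho(T)\ge\rho(T')+1$ unconditionally. In the example this yields $3\le 3$ on both sides. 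Until that (or an equivalent) modified decomposition is worked out --- together with a rigorous treatment of your third subcase, which is currently only sketched --- the chain $\gamma(T)\le\gamma(T^{*})+1\le\rho(T^{*})+1\le\rho(T)$ does not close.
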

\begin{theorem}
Let $T$ be a tree of order $n$, with $\ell$ leaves and $s$ support vertices. Then 
$$‎\gamma(T)\leq ‎\frac{n-\ell+2‎s}{3}$$
and this bound is sharp.
\end{theorem}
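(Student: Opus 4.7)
The plan is to derive this bound as a quick corollary of Theorem 4.1 combined with Meir and Moon's identity $\gamma(T) = \rho(T)$ (Lemma 4.2). The key observation is that for a tree $T$ of order $n \geq 3$ every non-pendant vertex is internal and so has degree at least $2$; hence the parameter $\delta'$ appearing in Theorem 4.1 satisfies $\delta' \geq 2$.

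First I would apply Theorem 4.1 to $T$ to obtain
$$\rho(T) \leq \frac{n - \ell + \delta' s}{1 + \delta'},$$
and then treat the right-hand side as a function $f(x) = (n - \ell + xs)/(1+x)$ of a real variable $x \geq 1$. Rewriting
$$f(x) = s + \frac{n - \ell - s}{1+x},$$
it becomes transparent that $f$ is non-increasing on $[1,\infty)$ provided $n - \ell - s \geq 0$. Since in any tree with $n \geq 3$ the leaves and the support vertices form disjoint subsets of $V(T)$, we have $\ell + s \leq n$, so the required monotonicity holds. Evaluating at $x=2$ gives $f(\delta') \leq f(2) = (n-\ell+2s)/3$, and combining with $\gamma(T) = \rho(T)$ yields the stated bound. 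The tiny cases $n \in \{1,2\}$ can be checked by inspection.

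For sharpness I would exhibit concrete extremal trees: the star $K_{1,n-1}$ (with $\ell = n-1$ and $s = 1$) gives $(n-\ell+2s)/3 = 1 = \gamma(K_{1,n-1})$, and the path $P_4$ (with $\ell = s = 2$) gives $(4-2+4)/3 = 2 = \gamma(P_4)$. I do not anticipate any real obstacle in the argument; the only thing worth checking carefully is the elementary inequality $\ell + s \leq n$ needed for the monotonicity step, which is immediate from the fact that a support vertex of a tree is never itself a leaf.
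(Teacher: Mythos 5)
Your proposal is correct and follows essentially the same route as the paper: apply Theorem 4.1, note $\delta'\geq 2$ for a tree of order $n\geq 3$, and invoke Lemma 4.2 ($\gamma(T)=\rho(T)$). Your explicit justification of the monotonicity of $(n-\ell+xs)/(1+x)$ via $\ell+s\leq n$ is a detail the paper leaves implicit, and your sharpness examples are special cases of the paper's family of trees whose vertices are all leaves or support vertices.
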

\begin{proof}
The result is obvious for $n=1,2$. So, let $n‎\geq3‎$. Since $‎\delta'‎\geq max\{2,‎\delta‎\}=2‎‎‎‎$, then $‎‎\rho(T)‎\leq‎‎\frac{n-\ell+‎\delta'‎s}{1+‎\delta'}‎\leq ‎\frac{n-\ell+2s}{3}‎‎$. Now Lemma 4.2 implies
\begin{equation*}
‎\gamma(T)=‎‎\rho(T)‎‎\leq ‎\frac{n-\ell+2‎s}{3}.
\end{equation*}
T‎o show that the lower bound is sharp, we consider all trees $T$ in which all vertices are leaves or support vertices. In this case, it is easy to see that $\gamma(T)=‎‎\rho(T)=s=(n-\ell+2s)/3$.
\end{proof}

Using the open packing $B$ given in Part $(ii)$ of Lemma 3.1 we can bound the open packing number of a graph from above as follows:
\begin{theorem}
For any graph $G$ of order $n‎\geq3‎$ with $s$ support vertices
$$‎\rho‎^{o}‎(G)‎‎‎‎\leq ‎\frac{n+(‎\delta'-1‎)s}{‎\delta'‎}.‎‎$$
\end{theorem}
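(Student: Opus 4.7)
The plan is to mirror the structure of the proof of Theorem 4.1, but replace the packing condition with the open packing condition and exploit Part $(ii)$ of Lemma 3.1 instead of Part $(i)$. The rough idea is to pick a maximum open packing containing the chosen pendant vertices, separate $B$ into the ``forced'' pendants and the rest, and then double-count edges with endpoints in $B$.

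First I would fix a maximum open packing $B$ in $G$ with $\{\ell_{1},\dots,\ell_{s}\}\subseteq B$, which exists by Lemma 3.1$(ii)$. The key structural remark is that every pendant vertex in $B$ must already be one of $\ell_{1},\dots,\ell_{s}$: if a pendant $b\in B$ has unique neighbor $v_{j}$ (necessarily a support vertex), then $\ell_{j}\in N(v_{j})\cap B$, so the open packing condition $|N(v_{j})\cap B|\le 1$ forces $b=\ell_{j}$. Consequently every vertex in $B\setminus\{\ell_{1},\dots,\ell_{s}\}$ is a non-pendant, and thus has degree at least $\delta'$.

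Next I would run a standard double-counting argument on edges with one endpoint in $B$. Because $B$ is an open packing, $|N(w)\cap B|\le 1$ for every $w\in V$, so
$$\sum_{b\in B}\deg(b)=\sum_{w\in V}|N(w)\cap B|\le n.$$
Using the degree lower bound from the previous step together with $\deg(\ell_{i})=1$ for $1\le i\le s$ gives
$$\sum_{b\in B}\deg(b)\ge s+\delta'(|B|-s).$$
Combining the two inequalities yields $\delta'|B|\le n+(\delta'-1)s$, which is the stated bound.

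I do not expect any serious obstacle here; the proof is essentially a one-line double count once the right set $B$ is in hand. The only point requiring care is ruling out ``extra'' pendants in $B$ that are not among the $\ell_{i}$, but as noted above this is immediate from the open packing condition applied at the corresponding support vertex.
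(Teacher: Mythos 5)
Your proof is correct and follows essentially the same route as the paper's: take a maximum open packing $B$ containing $\ell_1,\dots,\ell_s$ via Lemma 3.1(ii) and double-count edges meeting $B$ against the open packing condition $|N(w)\cap B|\le 1$ together with the degree bound $\delta'$. Your bookkeeping (summing $\deg(b)$ over $b\in B$ and applying the open packing bound over all of $V$, rather than bounding the cut $|[B,V\setminus B]|$ from both sides) neatly avoids the paper's split of the pendants $\ell_i$ according to whether their support vertex lies in $B$, and your explicit observation that every pendant vertex of $B$ must be among the $\ell_i$ makes precise a point the paper uses only implicitly.
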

\begin{proof}
Let $B$ be maximum open packing in $G$. According to the second part of Lemma 3.1, we may assume that $\{\ell‎_{1},...,\ell‎_{s}\}‎\subseteq B‎‎‎$. After perhaps relabelling the index $1‎\leq i‎\leq s‎‎$, let $\{\ell‎_{1},...,\ell‎_{s‎_{1}‎}\}‎$ be the set of all pendant vertices in $B$ with neighbors in $V‎\setminus B‎$. This shows that $s‎_{2}=s-s‎_{1}‎‎$ pendant vertices in $B$ have their support vertices in $B$. Since $B$ is an open packing in $G$, then
\begin{equation*}
s‎_{1}‎+(‎\delta'-1‎)(|B|-s‎_{1}-s‎_{2}‎‎)‎\leq|[B,V‎\setminus B‎]|‎.
\end{equation*}
On the other hand every vertex in $V‎\setminus B‎$ has at most one neighbor in $B$. Hence
\begin{equation*}
|[B,V‎\setminus B‎]|‎\leq n-|B|‎‎.
\end{equation*}
These two inequalities imply the desired upper bound.
\end{proof}
Rall \cite{r} established an open packing and total domination equality for trees analogous to the well-known result of Meir and Moon as follows:
\begin{lemma}\cite{r}
For every tree $T$ of order at least two, $‎\gamma‎_{t}(T)=‎\rho^o(T)‎‎‎$.
\end{lemma}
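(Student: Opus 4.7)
My plan is to prove the two inequalities $\gamma_t(T)\geq\rho^o(T)$ and $\gamma_t(T)\leq\rho^o(T)$ separately. The first is a general fact that holds for any graph without isolated vertices and follows from a short injection argument, while the second is tree-specific and I would obtain by induction on the order.

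For the direction $\gamma_t(T)\geq\rho^o(T)$, I would fix a minimum total dominating set $D$ and a maximum open packing $B$ of $T$. Since $D$ totally dominates $T$, for each $b\in B$ one may choose a vertex $f(b)\in D\cap N(b)$. The map $b\mapsto f(b)$ is injective: if $f(b)=f(b')=d$ for distinct $b,b'\in B$, then $d$ is a vertex with $|N(d)\cap B|\geq 2$, contradicting the open packing condition. Hence $\rho^o(T)=|B|\leq|D|=\gamma_t(T)$.

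For the reverse inequality I would induct on $n=|V(T)|$, the base case $n=2$ being trivial since $\gamma_t(K_2)=\rho^o(K_2)=2$. For the inductive step I would fix a diametral path $v_0v_1\cdots v_k$ so that $v_0$ is a leaf, $v_1$ a support vertex, and every branch of $T$ hanging off $v_2$ other than the one containing $v_0$ has depth at most one. Using part $(ii)$ of Lemma~3.1, I would start from a maximum open packing $B$ containing all pendant vertices of $T$. The rest is a short case analysis on $\deg(v_1)$ and on whether $v_2$ is itself a support vertex: in each case I would delete a small subtree $H$ rooted at $v_1$ (or at $v_2$), apply the inductive hypothesis to $T-V(H)$ to obtain a total dominating set of size $\rho^o(T-V(H))$, and extend it to $T$ by adjoining exactly $|B\cap V(H)|$ vertices — typically $v_1$ together with one appropriate neighbor — giving $\gamma_t(T)\leq\rho^o(T)$.

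The main obstacle is the local bookkeeping near a support vertex with several leaves: every total dominating set must contain such a $v_1$, yet $|N(v_1)\cap B|\leq 1$ lets at most one leaf at $v_1$ enter $B$, which introduces an apparent imbalance between the two parameters. The diametral-path choice is what resolves this, because it forces the subtree removed at the inductive step to contain exactly as many open-packing vertices as total-domination vertices needed to cover it, so that both parameters drop by the same amount and the equality propagates. The most delicate subcase is when $v_2$ is itself a support vertex and a leaf of $v_2$ already belongs to $B$: here $v_2$ must also enter $D$, and I would need to enlarge the removed subtree accordingly so that the equality is preserved through the step.
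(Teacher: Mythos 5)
The paper does not prove this statement at all: it is quoted as a known result of Rall \cite{r}, so there is no in-paper argument to compare against. Judged on its own, your proposal is half a proof. The inequality $\rho^o(T)\leq\gamma_t(T)$ is done correctly and completely: the map $b\mapsto f(b)\in D\cap N(b)$ is well defined because $D$ is a total dominating set, and injective because $B$ is an open packing. That part needs nothing more.

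The converse inequality, however, is only an outline, and the accounting in the inductive step does not close as written. Your chain is $\gamma_t(T)\leq\gamma_t(T')+c\leq\rho^o(T')+c$ with $T'=T-V(H)$ and $c=|B\cap V(H)|$; to conclude $\gamma_t(T)\leq\rho^o(T)$ you need $\rho^o(T')+c\leq\rho^o(T)$, i.e.\ $\rho^o(T')\leq\rho^o(T)-|B\cap V(H)|$. But restricting the maximum open packing $B$ to $T'$ only yields the \emph{reverse} inequality $\rho^o(T')\geq\rho^o(T)-|B\cap V(H)|$. What you actually need is a construction showing that an arbitrary maximum open packing of $T'$ can be enlarged by $c$ vertices of $H$ to an open packing of $T$, and this can fail for a careless choice of $H$ (a maximum open packing of $T'$ may already occupy $v_2$ or a neighbour of $v_2$ in a way that blocks every candidate vertex of $H$). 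This is exactly the ``delicate subcase'' you flag at the end without resolving, and it is the heart of the theorem; the case analysis on $\deg(v_1)$ and on whether $v_2$ is a support vertex is asserted to be ``short'' but is never carried out, and the degenerate cases where $T'$ has order $1$ (so that $\gamma_t(T')$ is undefined) are not addressed. As it stands the proposal is a plausible plan for the hard direction, not a proof of it.
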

Since $‎\delta'‎\geq2‎‎$ for $n‎\geq3‎$, then $\rho‎^{o}‎(T)‎‎‎‎\leq\frac{n+(‎\delta'-1‎)s}{‎\delta'‎}‎\leq \frac{n+s}{‎2‎}‎$.
Now, as an immediate result of Theorem 4.4 and Lemma 4.5 we have improved the following result, that was proved by Chellali and Haynes by induction on the order $n$. 
\begin{theorem}\cite{ch}
If $T$ is a tree of order $n‎\geq3‎$ with $s$ support vertices, then $‎\gamma‎_{t}(T)‎\leq ‎\frac{n+s}{2}‎‎‎‎$ and this bound is sharp.
\end{theorem}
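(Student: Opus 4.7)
The plan is to obtain Theorem 4.6 as an immediate consequence of Theorem 4.4 and Lemma 4.5, entirely bypassing the inductive argument that Chellali and Haynes used.

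First I would observe that in any tree $T$ of order $n\geq 3$, every non-pendant vertex has degree at least $2$; a degree-$0$ or degree-$1$ vertex would be either isolated (impossible in a connected graph with $n\geq3$) or pendant, contradicting the definition of $\delta'$. Hence $\delta'(T)\geq 2$. Now apply Theorem 4.4 to $T$, which is legal since $n\geq 3$, to get
$$\rho^{o}(T)\leq \frac{n+(\delta'-1)s}{\delta'}=s+\frac{n-s}{\delta'}.$$

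Next I would verify that the right-hand side is non-increasing as a function of $\delta'$: this amounts to showing $n-s\geq 0$. Since for $n\geq 3$ no leaf of a tree can simultaneously be a support vertex (its unique neighbor would then also be a leaf, forcing $n=2$), the sets of leaves and support vertices are disjoint, so $s\leq n-\ell\leq n$. Therefore the displayed expression is maximized at $\delta'=2$, yielding
$$\rho^{o}(T)\leq s+\frac{n-s}{2}=\frac{n+s}{2}.$$
Combining this with Rall's equality $\gamma_{t}(T)=\rho^{o}(T)$ from Lemma 4.5 gives the claimed upper bound at once.

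For sharpness, I would exhibit a family in which both the bound of Theorem 4.4 at $\delta'=2$ and the Rall equality are tight, e.g.\ the ``caterpillar-type'' trees in which every non-leaf vertex is a support vertex of degree $2$ along a central path, so that the minimum total dominating set consists of all support vertices together with a controlled overlap. No real obstacle arises: the improvement here is conceptual, trading an induction on $n$ for a one-line monotonicity argument in $\delta'$, and the only thing to check carefully is that the $\delta'\geq 2$ hypothesis applies, which is automatic for trees with $n\geq 3$.
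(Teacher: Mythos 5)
Your proposal is correct and follows essentially the same route as the paper: the author likewise deduces the bound by combining Theorem 4.4 with $\delta'\geq 2$ (noting $\rho^{o}(T)\leq\frac{n+(\delta'-1)s}{\delta'}\leq\frac{n+s}{2}$) and then invokes Rall's equality $\gamma_{t}(T)=\rho^{o}(T)$ from Lemma 4.5. Your added monotonicity check in $\delta'$ is a harmless elaboration of the same one-line argument.
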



‎‎‎‎‎‎‎‎‎

\end{document}